\newtheorem{theorem}{Theorem}
\newtheorem{lemma}[theorem]{Lemma}
\theoremstyle{definition}
\newtheorem{definition}[theorem]{Definition}
\theoremstyle{remark}
\newtheorem{remark}[theorem]{Remark}
\def\ocirc#1{\ifmmode\setbox0=\hbox{$#1$}\dimen0=\ht0
    \advance\dimen0 by1pt\rlap{\hbox to\wd0{\hss\raise\dimen0
    \hbox{\hskip.2em$\scriptscriptstyle\circ$}\hss}}#1\else
    {\accent"17 #1}\fi}
\newcommand{\R}{\mathbb{R}}
\newcommand{\N}{\mathbb{N}}
\newcommand{\supp}{\operatorname{supp}}
\newcommand{\sign}{\operatorname{sign}}
\newcommand{\Log}{\log}
\title[Blow-up scenarios for 3D NSE.]{Blow-up scenarios for 3D NSE exhibiting
sub-criticality with respect to the scaling of one-dimensional local sparseness.}
\author{Z. Bradshaw}
\author{Z. Gruji\'c}
\begin{document}

\begin{abstract}It is shown that, if the vorticity magnitude associated with a
(presumed singular) three-dimensional incompressible Navier-Stokes flow blows-up
in a manner exhibiting certain {\em time dependent local structure},
then {\em time independent} estimates on the $L^1$ norm of $|\omega|\log\sqrt{1+ |\omega|^2}$
follow.  The implication is that the volume of the region of high vorticity decays
at a rate of greater order than a rate connected to the critical scaling of
one-dimensional local sparseness and, consequently, the solution becomes sub-critical.
\end{abstract}

\maketitle
\section{Introduction}
The purpose of this article is to present blow-up scenarios under
which Leray solutions to the 3D Navier-Stokes equations (3D NSE)
behave \emph{sub-critically} with respect to the critical scaling of
\emph{one-dimensional local sparseness}. In order to contextualize
our discussion we begin with several general remarks. Speaking
informally, there is a scaling-gap between known {\em a priori}
finite quantities on one hand, e.g. $\sup_{0<t<T}||u||_2^2$ and $ \sup_{0<t<T}
||\nabla u||_{1}$, and, on the other hand, quantities with respect to which regularity can be
conditioned, e.g. $\sup_{0<t<T}||u||_3^3$ and
$\sup_{0<t<T}||\nabla u||_2^2$.  Under the natural scaling for 3D
NSE, \[u(x,t)\mapsto \frac 1 \lambda u_{\lambda}\bigg(\frac x
\lambda,\frac t {\lambda^2}\bigg),\]we see that the {\em a priori}
controlled quantities exhibit sub-critical scaling -- our examples
scale as $\lambda^1$ -- while those sufficient for regularity scale
critically as $\lambda^0$.  This mismatch is referred to as the
``scaling-gap'' and indicates the Navier-Stokes problem is
super-critical.

A theme apparent in many regularity results is the inclusion of
premises that \emph{explicitly bridge} the scaling gap. For example,
a significant result in regularity theory was non-existence of
backward-in-time self-similar blow-up (cf.
\cite{Hou-Li,NRSv,Tsai-99}). Because self-similar solutions are
scaling invariant in virtue of their construction, their study is
effectively a restriction to a class of critical solutions.  Indeed,
self-similar solutions satisfy a scaling invariant point-wise bound,
$ \operatorname{ess
\,sup} \big((|x-x_0|+\sqrt{T-t})|u(x,t)|\big)<\infty$, where the essential supremum is taken over an appropriate parabolic cylinder.  Regularity of solutions satisfying this estimate has been affirmed if the solution is additionally assumed to be
axisymmetric  (cf. \cite{ChStTsYa-II,ChStTsYa-I,KNSeSv,SeSv-08}) but the general case remains an open problem.
Regarding the axisymmetric case, the theme originally referenced is again apparent: first, the
flow is assumed to exhibit some feature which is critical and,
second, additional restrictions are identified from which regularity
follows. Similar examples, in particular those from the
$\epsilon$-regularity theory, require additional conditions on the
smallness of the presumed finite scale-invariant quantity.

In two recent complementary publications, \cite{DaGr12-1, Gr12}, a
new \emph{dynamic} approach to bridging the scaling gap is
elucidated. The physical motivation is the persistence (in the
average sense) of the axial lengths of vortex filaments.  This
picture is supported in experimental and numerical studies and by a
mathematical result which is the main result of \cite{DaGr12-1}. By
considering this length persistence and the {\em decay rate for the
volume of the region of intense vorticity}, a connection is found
between the scaling of the latter quantity and the {\em critical
scaling of one-dimensional local sparseness}.  As this argument
provides the context for our own results we make the matter more
precise.

The rigorous regularity criteria is presented in \cite{Gr12}. In
that paper, regularity of mild solutions to 3D NSE with
$L^\infty(\R^3)$ initial data is conditioned on a geometric
measure-type criteria involving one-dimensional linear sparseness of
the super-level sets of the vorticity.  The implication is that the
{\em critical scaling} for the {\em local one-dimensional
sparseness} of the region of intense vorticity is of order
$c_0||\omega(t)||_\infty^{-1/2}$.   The proof of this geometric
measure-type regularity criteria is based on an interplay among the
diffusion, the basic symmetries present in the 3D NSE and geometric
properties of the harmonic measure (resulting in strong
\emph{anisotropic} diffusion); this in turn draws on the ideas
exposed in \cite{Gr01}. On the other hand, the {\em a priori}
estimates on the $L^1$ norm of the vorticity found in \cite{Co90,
Lions} imply the volume of the region where vorticity magnitudes are
high decays according to, \[ \mbox{Vol}\bigg( \Lambda_t\bigg(\frac 1
{c_1} ||\omega(t)||_\infty\bigg)\bigg) \leq \frac {c_2}
{||\omega(t)||_\infty^{}},\] where
$\Lambda_t(y)=\{x:|\omega(x,t)|\geq y\}$. Interestingly,
if the region of intense vorticity corresponds precisely to the
space occupied by filamentary vortex structures and the length of
these filaments is non-decreasing -- that is, pinned to the
characteristic length scale of a turbulent region -- then the
anti-axial diameters of these filaments exhibit a rate of decay of
order at least $c_3||\omega(t)||_\infty^{-1/2}$, which matches the
{\em critical scaling} for {\em local one-dimensional sparseness}.
It is in this sense that the problem is rendered critical. That the
vortex filaments have persistent lengths (in the average) which are
comparable to the scale of the turbulent region is not yet
rigorously established but is supported by numerical evidence as
well as a mathematical evidence which is the prime consideration of
\cite{DaGr12-1}.

Even assuming the soundness of the above argument it is not
immediately evident that regularity follows (it would if one could
reconcile the possible difference between $c_0$ and $c_3$). The
present paper partially overcomes this by illustrating two blow-up
scenarios in which the region of intense vorticity decays at a
faster rate than the critical rate discussed in \cite{Gr12,
DaGr12-1}, thereby rendering the constants irrelevant. In
particular, we are interested in concluding,
\[  \mbox{Vol}\bigg( \Lambda_t\bigg(\frac 1 {c_1} ||\omega(t)||_\infty\bigg)\bigg)
\leq \frac {c_3}
{||\omega(t)||_\infty^{}\Phi\big(||\omega(t)||_\infty\big)},\]where,
in our first scenario, $\Phi(x)=\log(1+x)$, and, in the second,
$\Phi(x)=\log\log(e+x)$. These results are given in  Section
\ref{sec:mainresult}. We note for clarity that ours are not stated
as regularity criteria but instead scenarios under which 3D NSE
becomes {\em sub-critical} with regard to the scaling described in
\cite{DaGr12-1, Gr12}.

The above decay rates will be obtained by imposing certain
 structural requirements on the blow-up rates exhibited by
vorticity components. Let $\omega_j^+$ and $\omega_j^-$ respectively denote the positive and
negative vorticity components truncated away from zero.  We will
define  {\em amenable blow-up rates of orders 0 and 1} in detail in
Definition \ref{def:blowup} and here only illustrate a class of
functions exhibiting such blow-up profiles and describe how the
blow-up rates are connected to $\omega_j^\pm$.  A function $g$ (to
be identified with one of the $\omega_j^\pm$s) exhibits a {\em local
algebraic blow-up} (around $x_0$, at time $T$) if there exists a
constant $C>1$ so that, for $(x,t)$ in a
parabolic cylinder, $Q=B(x_0,r)\times (0,T]$, we have,
\begin{align*}\frac 1 {C} \bigg( \frac 1
{|p (x,t)|+\tau (t)}\bigg)^{\alpha (t)}\leq |g(x,t)|
\leq C \bigg(\frac {1}
{|p (x,t)|+\tau (t)}\bigg)^{\alpha (t)},
\end{align*}where, at each time $t$, $p (\cdot ,t)$ is some polynomial of
degree less than a fixed natural number $d $, $\alpha $
is positive valued and bounded away from both $0$ and $\infty$, and
$\tau$ is a positive (up to $T$) scalar function of time which
vanishes at the singular time.  The envisioned blow-up occurs at the zeros of $p(x,T)$ lying within $B(x_0,r)$.  There is a considerable amount of
freedom present in the above construction as the polynomial is
allowed to vary wildly in the time dimension. We will also
consider a scenario where some asymmetry is allowed between the
bounds assumed on vorticity components.  In particular, for
appropriate blow-up rates $D_j^\pm(x,t)$, we will require,
\begin{align*} D_j^\pm(x,t)\leq |\omega_j^\pm(x,t)| \leq CD_j^\pm(x,t)^{\beta_j^\pm(t)},
\end{align*}where $\beta_j^\pm:(0,T)\to [1,B_j]$ for some fixed value $B_j$.

A key role is played in our analysis by the {cancellations}
evident in the vortex stretching term in the context of the real
Hardy space $\mathcal{H}^1$ exploited via the Div-Curl lemma
\cite{CoLiMeSe} and the $\mathcal{H}^1-BMO$ duality
\cite{FeSt-72,Fe-72}. In the standard way this gives uniform-in-time
control of the vortex stretching term. The structural blow-up
assumptions are provided to ensure uniform-in-time control of the
$BMO$ norm of a multiplier (multiplied against the vortex stretching
term). In the algebraic case, this is enabled by the logarithm's
depletive effect on the unboundedness of the mean oscillations of
polynomial functions.  The effectiveness is witnessed by the
remarkable fact (cf. a proof by Stein in \cite{BeijingLectures})
that there exists a constant $C(d,n)$ so that, for any polynomial on
$\R^n$ of degree less than or equal to $d$, \[\big|\big|\log
|P|\big|\big|_{BMO}\leq C(d,n).\] In particular, the constant is
\emph{independent of the coefficients}. This allows us to introduce
{\em time-dependent algebraic comparability conditions} on the
spatial profiles prior to a possibly singular time and do so in a
manner that preserves {\em time-independent estimates} on the $BMO$
norms of the logarithms of these profiles.

We proceed in Section \ref{sec:preliminaries} to review needed
results from harmonic analysis and then define the classes of
blow-up scenarios which will be amenable to our PDE argument.  We
also include results which connect the structural blow-up
assumptions to an energy inequality-type argument given in Section
\ref{sec:mainresult}.  The statements and proofs of the main results
are contained in Section \ref{sec:mainresult}.

%********************************************************************
%***    SECTION 2    ****
%********************************************************************
\section{Preliminaries and Amenable Blow-Up Rates} \label{sec:preliminaries}

%********************************************************************
%***    HARMONIC ANALYSIS    ****
%********************************************************************
Here we review needed results from harmonic analysis and present a
lemma which will connect these ideas to the PDE context of Section
\ref{sec:mainresult}.  Following \cite{St93}, the maximal function of a
distribution $f$ is defined for all $x\in \R^n$ as,
\[M_hf(x)=\sup_{t>0}|f*h_t(x)|,
\] where $h$ is a fixed test function supported on the unit ball so that $\int h ~dx=1$ and $h_t$ denotes $t^{-n}h(x/t)$.

\begin{definition}
The distribution $f$ is in the Hardy space $\mathcal H^1$ if $||f||_{\mathcal H^1}:=||M_hf||_1<\infty$.
\end{definition}

In \cite{CoLiMeSe}, Coifman, Lions, Meyer, and Semmes reformulated and refined some
key features of the `sequential' theory of compensated compactness within the framework
of Hardy spaces, the key idea being that certain \emph{nonlinear quantities} exhibit \emph{cancelations} yielding the improved regularity. One such result is the Div-Curl lemma.

\begin{lemma}(Coifmann, Lions, Meyer, Semmes -- \cite{CoLiMeSe}) Suppose
$E,B\in L^2 (\R^3)^3$ with $\nabla \cdot E=0$ and $\nabla \times B
=0$ (in the sense of distributions).  Then, $E\cdot B\in \mathcal
H^1$, and there exists a universal constant $C$ such that
\[||E\cdot B ||_{\mathcal H^1}\leq C ||E||_{L^2}||B||_{L^2}.\]
\end{lemma}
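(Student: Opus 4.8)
The plan is to exploit the curl-free structure of $B$ to rewrite $E\cdot B$ as a divergence, and then to estimate the maximal function defining $\mathcal{H}^1$. Since $\curl B=0$ on the (contractible) space $\R^3$ and $B\in L^2$, there is a scalar potential $\phi$ with $B=\nabla\phi$, normalized so that $\phi\in\dot H^1(\R^3)$ and $\|\nabla\phi\|_{L^2}=\|B\|_{L^2}$ (on the Fourier side $\widehat B$ is parallel to $\xi$, which makes this explicit). Because $\nabla\cdot E=0$, the distributional identity $E\cdot B=E\cdot\nabla\phi=\nabla\cdot(\phi E)$ holds; in particular $E\cdot B\in L^1(\R^3)$ with $\|E\cdot B\|_{L^1}\le\|E\|_{L^2}\|B\|_{L^2}$ by Cauchy--Schwarz, so it remains only to bound $\|M_h(E\cdot B)\|_{L^1}$.

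Fix $t>0$ and $x\in\R^3$. Since $h_t$ is supported in $B(x,t)$, integration by parts (using $E\cdot B=\nabla\cdot(\phi E)$, then $\nabla\cdot E=0$ once more to insert an average $\phi_{B(x,t)}$ of $\phi$ at no cost) gives
\[
(E\cdot B)*h_t(x)=\int_{B(x,t)}\bigl(\phi(y)-\phi_{B(x,t)}\bigr)\,E(y)\cdot(\nabla h_t)(x-y)\,dy .
\]
With $|\nabla h_t|\le Ct^{-4}\mathbf{1}_{B(0,t)}$, Hölder's inequality with exponents $6$ and $6/5$ together with the scale-invariant Sobolev--Poincar\'e inequality $\|\phi-\phi_{B(x,t)}\|_{L^6(B(x,t))}\le C\|\nabla\phi\|_{L^2(B(x,t))}$ give, after tracking the powers of $t$,
\[
M_h(E\cdot B)(x)\le C\,\bigl(\mathcal{M}(|B|^2)(x)\bigr)^{1/2}\,\bigl(\mathcal{M}(|E|^{6/5})(x)\bigr)^{5/6},
\]
where $\mathcal{M}$ is the Hardy--Littlewood maximal operator. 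Since $|E|^{6/5}\in L^{5/3}$ with $5/3>1$, the second factor lies in $L^2$ with norm $\lesssim\|E\|_{L^2}$; since $|B|^2$ is merely integrable, the first factor lies in $L^{2,\infty}$ (with quasinorm $\lesssim\|B\|_{L^2}$), and no better.

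A product of an $L^2$ function and an $L^{2,\infty}$ function is in $L^{1,\infty}$ but not, in general, in $L^1$; so the estimate above only places $E\cdot B$ in the \emph{weak} Hardy space, and recovering the genuine $L^1$ bound on $M_h(E\cdot B)$ is the crux. I expect this endpoint --- not the manipulations above --- to be the main difficulty: the needed gain cannot come from any pointwise domination and must be extracted from cancellation used globally. Following Coifman--Lions--Meyer--Semmes, I would obtain it by decomposing $E\cdot B$ into $\mathcal{H}^1$-atoms produced from a Calder\'on--Zygmund stopping-time decomposition at each height, the vanishing-mean condition on the atoms being furnished precisely by the divergence structure $E\cdot B=\nabla\cdot(\phi E)$ (the symmetric representation $E=\curl A$, $E\cdot B=\nabla\cdot(A\times B)$ with $\|\nabla A\|_{L^2}=\|E\|_{L^2}$ is also available, and useful for balancing the two factors). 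Finally, since the present paper uses $\mathcal{H}^1$ only via the duality $\mathcal{H}^1=(VMO)^*$, it would in fact suffice to prove $\bigl|\int(E\cdot B)\varphi\bigr|\le C\|E\|_{L^2}\|B\|_{L^2}\|\varphi\|_{BMO}$ for $\varphi$ in a dense subclass of $VMO$ --- starting from $\int(E\cdot B)\varphi=-\int\phi\,E\cdot\nabla\varphi$, localizing $\varphi$ to cubes, and using the $L^2$-averaged (John--Nirenberg) control of $\varphi-\varphi_Q$.
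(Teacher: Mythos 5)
The paper does not prove this lemma --- it is imported verbatim from Coifman--Lions--Meyer--Semmes --- so the only question is whether your argument is a complete proof. It is not: you have correctly set up the CLMS strategy (write $B=\nabla\phi$, use $\nabla\cdot E=0$ to insert the mean $\phi_{B(x,t)}$, and dominate $M_h(E\cdot B)$ pointwise by a product of maximal functions), and your computation that the endpoint choice of exponents only yields $E\cdot B$ in weak $\mathcal H^1$ is accurate. But you then assert that ``the needed gain cannot come from any pointwise domination'' and defer to an atomic decomposition that you do not carry out. That diagnosis is wrong, and it is exactly where the actual proof lives: the remedy is simply to move \emph{both} Hölder exponents strictly off their endpoints. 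In $\R^3$, apply Hölder with exponents $3$ and $3/2$ and the scale-invariant Sobolev--Poincar\'e inequality $\|\phi-\phi_{B(x,t)}\|_{L^3(B(x,t))}\le C\|\nabla\phi\|_{L^{3/2}(B(x,t))}$ (note $(3/2)^*=3$ in three dimensions). Tracking powers of $t$ exactly as you did gives
\[
M_h(E\cdot B)(x)\le C\bigl(\mathcal{M}(|B|^{3/2})(x)\bigr)^{2/3}\bigl(\mathcal{M}(|E|^{3/2})(x)\bigr)^{2/3}.
\]
Since $|E|^{3/2},|B|^{3/2}\in L^{4/3}$ and $4/3>1$, the Hardy--Littlewood maximal theorem applies to \emph{both} factors, each of which therefore lies in $L^2$ with norm controlled by $\|E\|_{L^2}$, $\|B\|_{L^2}$ respectively; Cauchy--Schwarz then puts the product in $L^1$ and the lemma follows. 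No Calder\'on--Zygmund stopping time, no atoms, no weak-type interpolation.

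So the gap is concrete: your exponent pair $(6,6/5)$ forces the full strength of $\|\nabla\phi\|_{L^2}$ locally, which can only be converted into $\mathcal{M}(|B|^2)$, a maximal function of an $L^1$ function, and that is where the weak-type loss enters. Any choice $(a,a')$ with $a>2$, $a'<2$, paired with $\|\phi-\phi_B\|_{L^a}\lesssim t^{1+3/a-3/b}\|\nabla\phi\|_{L^b}$ for some $b<2$ with $1/b\le 1/a+1/3$, avoids the loss entirely. Your final remark --- that for the purposes of this paper one could instead prove the dual estimate $\bigl|\int (E\cdot B)\varphi\bigr|\le C\|E\|_{L^2}\|B\|_{L^2}\|\varphi\|_{BMO}$ directly --- is a legitimate alternative route, but as written it too is only a sketch, and it would still need the localization and John--Nirenberg bookkeeping spelled out.
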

Because weak solutions to the Navier-Stokes equations are divergence
free and because the curl of a gradient is always equal to the
trivial distribution, it follows that the advective term,
$(u\cdot\nabla )u$, in the velocity-pressure formulation exhibits
div-curl structure, as does the vortex stretching term,
$(\omega\cdot\nabla)u$, in the vorticity-velocity formulation.  Consequently,
using Hardy spaces, refined regularity results can be established
for weak solutions of 3D NSE (see Chapter 3.2 of \cite{Lions} for a
collection of such results).

The well known result of Fefferman (cf. \cite{Fe-72,FeSt-72}, as
well as Stein's monograph \cite{St93}) establishes that the dual
space of $\mathcal H^1$ is precisely the space of functions of
bounded mean oscillation, $BMO$. By $f_B$ we denote the average of
the locally integrable function $f$ over the ball $B$ of volume
$|B|$; i.e., $f_B=|B|^{-1}\int_Bf~dx$.  The mean oscillation of $f$
over $B$ is  $|B|^{-1}\int |f-f_B|~dx$ and is the quantity typically
used to characterize $BMO$.
\begin{definition}The locally integrable function $f$ is in $BMO$ if  $||f||_{BMO}<\infty$ where
 \[||f||_{BMO}:=\sup_{x\in \R^3;0<r}\frac 1 {|B(x,r)|}\int_{B(x,r)} \big|f(y)-f_{B(x,r)} \big|~dy.\]
\end{definition}

Several results about the space $BMO$ will expedite future work. An
elementary fact is that, if $g\in BMO$, then the
following implication holds,
\begin{align}\label{implication:bmointerp}g(x)+m\leq f(x)\leq g(x)+M
\,\mbox{for}\, \operatorname{a.e.} \, x\in\R^3  \quad \Longrightarrow  \quad ||f||_{BMO}\leq
||g||_{BMO}+M-m.\end{align}  The above is easy to see by directly
comparing $|f-f_B|$ to $|g+M-(g+m)_B|$ and concluding that, for all
balls $B$, we have,
\[\frac 1 {|B|} \int_B \big| f-f_B\big|~dx\leq \frac 1 {|B|}\int_B \big| g-g_B\big|~dx+M-m.\]
We include a technical lemma regarding certain point-wise
multipliers on $BMO$ which appears in a paper by Iwaniec and Verde
(cf. Lemma 2.1 of \cite{IwVe-99}; see also Lemma 5.10 of
\cite{CDS99}). This result will allow us to use the standard
$\mathcal H^1$-$BMO$ duality in an essentially local context instead
of working in the local versions of these spaces. It is worth
mentioning that our results can be achieved in the local spaces
using a local non-homogeneous version of the Div-Curl lemma (cf. \cite{CoLiMeSe})
paired with various local $h^1-bmo$ dualities (cf. \cite{CDS99}) thereby obtaining estimates on the localized vortex stretching term (this approach is similar to the estimates in \cite{GrGu2}).

\begin{lemma}\label{lemma:bmomultiplier}(Iwaniec and Verde -- \cite{IwVe-99}) Suppose
$h\in BMO (\R^n)$ and $\phi\in C_0^1(\R^n)$ is supported on the ball $B$.  Then $\phi h\in
BMO(\R^n)$ and we have,\[||\phi h ||_{BMO}\leq C(n,R) ||\nabla
\phi||_\infty\bigg( (||h||_{BMO}+\bigg|\frac 1 {|B|}\int_B h ~dx
\bigg| \bigg).\]
\end{lemma}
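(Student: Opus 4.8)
The plan is to bound the mean oscillation $\frac{1}{|Q|}\int_Q |\phi h - (\phi h)_Q|\,dy$ directly and uniformly over all balls $Q=B(z,r)$, splitting according to how $r$ compares with the radius $R$ of $B$ (write $x_0$ for the center of $B$). First come two reductions. Since $h\in BMO\subseteq L^1_{loc}$ and $\phi$ is bounded with compact support, $\phi h\in L^1(\R^n)$, and whenever $Q$ fails to meet $\supp\phi\subseteq B$ one has $\phi h\equiv 0$ on $Q$; so only balls meeting $B$ are relevant. Also, integrating $\nabla\phi$ along a segment from an interior point of $B$ out to $\partial B$ gives the crude pointwise bound $||\phi||_\infty\le 2R\,||\nabla\phi||_\infty$. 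I would use throughout the standard freedom $\frac{1}{|Q|}\int_Q|\phi h-(\phi h)_Q|\le \frac{2}{|Q|}\int_Q|\phi h-c|$, valid for any constant $c$, the game being to choose $c$ well in each regime.

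In the \emph{large-ball regime} $r\ge R$ I would take $c=0$. Since $\phi h$ is supported in $Q\cap B$, since $|B|\le|Q|$, and since $\int_B|h|\le |B|(||h||_{BMO}+|h_B|)$ (add and subtract $h_B$), one finds $\frac{1}{|Q|}\int_Q|\phi h|\le ||\phi||_\infty(||h||_{BMO}+|h_B|)$, which with $||\phi||_\infty\le 2R||\nabla\phi||_\infty$ already gives the asserted estimate for such $Q$.

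The \emph{small-ball regime} $r<R$ is where the substance lies, and I expect it to be the main obstacle. Here $Q\subseteq\tilde B:=B(x_0,3R)$. I would choose $c=\phi(z)h_Q$ and use the identity $\phi h-\phi(z)h_Q=\phi(h-h_Q)+h_Q(\phi-\phi(z))$. The first term integrates to at most $||\phi||_\infty\,||h||_{BMO}$; the second, using $|\phi(y)-\phi(z)|\le ||\nabla\phi||_\infty r$ for $y\in Q$, to at most $||\nabla\phi||_\infty\,r\,|h_Q|$. The delicate point is that $|h_Q|$ need not stay bounded as $r\to 0$ — in the worst case it grows like $\log(R/r)$ — so comparing $h_Q$ to $h_{\tilde B}$ via the volume ratio $|\tilde B|/|Q|$ is far too lossy. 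Instead I would invoke the classical dyadic chaining bound for $BMO$: telescoping over $B(z,2^j r)$ for $0\le j\le N$ with $2^N r$ of size comparable to $6R$ yields $|h_Q-h_{\tilde B}|\le C(n)(1+\log(R/r))\,||h||_{BMO}$, to which one adds $|h_{\tilde B}-h_B|\le 3^n||h||_{BMO}$. Multiplying by $r$ and using that $t\,(1+\log(R/t))$ is bounded by a fixed multiple of $R$ for $0<t<R$, I obtain $r\,|h_Q|\le C(n)\,R\,(||h||_{BMO}+|h_B|)$, and hence the required bound for small $Q$ too.

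Combining the two regimes and absorbing constants (in particular $||\phi||_\infty\le 2R||\nabla\phi||_\infty$) gives $||\phi h||_{BMO}\le C(n)\,R\,||\nabla\phi||_\infty(||h||_{BMO}+|h_B|)$, which is the assertion with $C(n,R)$ of the form $C(n)\,R$. The only ingredient not already in the excerpt is the dyadic chaining bound for $BMO$ functions; it is classical, and if one prefers a self-contained account it follows in two lines from the telescoping just described together with the elementary inequality $|g_{Q_1}-g_{Q_2}|\le (|Q_2|/|Q_1|)\,||g||_{BMO}$ for nested balls $Q_1\subseteq Q_2$.
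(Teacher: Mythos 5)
Your argument is correct. Note that the paper does not prove this lemma at all: it is quoted verbatim from Iwaniec and Verde (Lemma 2.1 of \cite{IwVe-99}), so there is no in-paper proof to compare against; what you have supplied is a self-contained replacement for the citation. Your two reductions (only balls meeting $B$ matter; $\|\phi\|_\infty\le 2R\|\nabla\phi\|_\infty$ via the fundamental theorem of calculus along a segment exiting $B$) are sound, the large-ball case with the constant $c=0$ and the bound $\int_B|h|\le|B|(\|h\|_{BMO}+|h_B|)$ is fine, and in the small-ball case you correctly identify and handle the one genuinely delicate point: the average $h_Q$ over a small ball $Q=B(z,r)$ can drift logarithmically away from $h_B$, so the naive volume comparison is too lossy, but the dyadic telescoping bound $|h_Q-h_{\tilde B}|\le C(n)(1+\log(R/r))\|h\|_{BMO}$ combined with the elementary fact that $t(1+\log(R/t))\le CR$ on $(0,R)$ exactly compensates the factor $r$ coming from $|\phi(y)-\phi(z)|\le r\|\nabla\phi\|_\infty$. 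This is essentially the same mechanism Iwaniec and Verde use (and the same logarithmic chaining that underlies the log-multiplier characterization of pointwise multipliers of $BMO$), so your route is not exotic, merely written out; it yields the constant in the explicit form $C(n,R)=C(n)\,R$, which is consistent with the statement. The only cosmetic caveat is that the chaining is over balls centered at $z$ while $\tilde B$ is centered at $x_0$, so one extra single-step comparison between $B(z,cR)$ and $\tilde B$ is needed, but since $|z-x_0|<2R$ this costs only a dimensional constant, as you indicate.
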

The space $BMO$ enjoys an intimate connection with the logarithm.
This is illustrated in the following lemma due to Stein.

\begin{lemma}\label{lemma:bmoandpolynomials}(Stein -- \cite{BeijingLectures})
Let $P$ be any polynomial in $\R^n$ of degree less than or equal to
$d$.  Then there exists a constant $C=C(d,n)$ so that $\log|P|\in
BMO$ and
\[||\log |P|\,||_{BMO}\leq C(d,n).\]
\end{lemma}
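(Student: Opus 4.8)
\medskip
\noindent\textbf{Proof sketch.} The plan is to reduce the lemma, using the affine covariance of $BMO$ and of the class of polynomials of degree at most $d$, to a single ``reverse Jensen'' inequality on the unit ball, and then to deduce that inequality from a quantitative sub-level-set (anti-concentration) estimate for polynomials together with the layer-cake formula. Throughout we of course assume $P\not\equiv0$, so that $\log|P|$ is defined a.e.

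First I would normalize. Fix a ball $B'=B(x_0,r)$ and let $\Phi(y)=x_0+ry$, so that $\Phi$ maps $B(0,1)$ onto $B'$ and $Q:=P\circ\Phi$ is a polynomial of the same degree as $P$. Using the elementary inequality $\frac{1}{|B'|}\int_{B'}|f-f_{B'}|\le 2\inf_{c\in\R}\frac{1}{|B'|}\int_{B'}|f-c|$, a change of variables, and the choice $c=\log\|Q\|_{L^\infty(B(0,1))}$ (positive since $P\not\equiv0$), one sees that it suffices to bound, uniformly over all polynomials $R$ with $\deg R\le d$, $R\not\equiv0$ and $\|R\|_{L^\infty(B(0,1))}=1$, the quantity $\frac{1}{|B(0,1)|}\int_{B(0,1)}\bigl|\log|R(y)|\bigr|\,dy$. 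On $B(0,1)$ we have $|R|\le1$, hence $\bigl|\log|R|\bigr|=\log(1/|R|)\ge0$, and by the layer-cake formula
\[\frac{1}{|B(0,1)|}\int_{B(0,1)}\log\frac{1}{|R(y)|}\,dy=\frac{1}{|B(0,1)|}\int_0^\infty |\{y\in B(0,1):|R(y)|<e^{-t}\}|\,dt.\]

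The problem is thus reduced to the following anti-concentration estimate: there exist $c_1,c_2>0$ depending only on $(n,d)$ such that, for all $\mu\in(0,1)$ and all polynomials $P$ with $\deg P\le d$,
\[|\{x\in B(0,1):|P(x)|<\mu\,\|P\|_{L^\infty(B(0,1))}\}|\le c_1\,\mu^{c_2}\,|B(0,1)|.\]
Granting this, substituting $\mu=e^{-t}$ into the integral above and using $\int_0^\infty e^{-c_2t}\,dt<\infty$ gives the desired bound, with $C(d,n)=c_1/c_2$; the argument is robust enough to absorb even a single logarithmic loss, a bound of the form $c_1\mu^{c_2}\log(1/\mu)$, since $\int_0^\infty t\,e^{-c_2t}\,dt<\infty$ as well.

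The heart of the matter — and the step I expect to be the main obstacle — is this anti-concentration estimate, specifically the fact that the rate is a genuine \emph{power} of $\mu$ with constants \emph{independent of the coefficients} of $P$: compactness of the finite-dimensional set of normalized polynomials yields only $|\{|P|<\mu\}|\to0$ as $\mu\to0$, which is too weak to be summable in $t$. In one dimension the estimate is elementary: factoring $P(x)=a\prod_j(x-z_j)$ over $\C$, one checks that once $\|P\|_{L^\infty[-1,1]}$ is normalized the contribution of the roots $z_j$ far from $[-1,1]$ is bounded below, so that $|P(x)|\gtrsim_d\prod_{z_j\text{ near }[-1,1]}|x-\operatorname{Re}z_j|$; combining this with $|x-z_j|\ge|x-\operatorname{Re}z_j|$ and the trivial inequality $\prod_{j=1}^k|x-r_j|\ge(\min_j|x-r_j|)^k$ shows that $\{|P|<\mu\}$ lies in a union of at most $d$ intervals of radius $\lesssim_d\mu^{1/d}$, giving $c_2=1/d$. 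For $n\ge2$ I would either invoke the multidimensional Remez inequality of Brudnyi and Ganzburg (whose explicit Chebyshev-polynomial bound again yields $c_2=1/d$), or argue by induction on the dimension: slice $B(0,1)$ by lines in a fixed direction, apply the one-dimensional estimate on each fiber, and control the set of ``bad'' fibers — those on which $|P|$ is itself small on the whole chord — via the inductive hypothesis (after comparing the sup norm of $P$ on $B(0,1)$ with its sup norm on a fixed interior box, itself a Remez-type comparison). Carrying out this last passage with constants uniform in $P$ is the delicate point of the whole argument.
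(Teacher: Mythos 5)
The paper does not prove this lemma; it is quoted verbatim from Stein's Beijing Lectures, so there is no internal proof to compare against. Your argument is the standard one and is correct: affine invariance reduces the mean oscillation over an arbitrary ball to $\frac{1}{|B(0,1)|}\int_{B(0,1)}\log(1/|R|)$ for a sup-normalized polynomial $R$ of degree $\le d$, and the layer-cake integral converges precisely because of the Remez/Brudnyi--Ganzburg sub-level-set bound $|\{|P|<\mu\,\|P\|_{L^\infty(B)}\}|\le c_1\mu^{1/d}|B|$, whose coefficient-independence is indeed the crux. You correctly identify that compactness alone is insufficient and that the multidimensional anti-concentration estimate carries all the weight; invoking Brudnyi--Ganzburg there is legitimate and closes the argument (your alternative slicing induction is the delicate route, but it is not needed). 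Note also that this power-of-$\mu$ decay is exactly what makes the bound on $\|\log|P|\|_{BMO}$ grow only linearly in $d$, the fact exploited in the paper's final remark.
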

The fact that the constant appearing above is \emph{independent of the
coefficients} evidences the remarkable extent to which the logarithm
depletes mean oscillations and motivates the assumptions on which the
conclusions of Section \ref{sec:mainresult} are conditioned. This
will be done by assuming that solutions blow-up in a fashion
possessing certain structure.  The remainder of this section is
dedicated to describing amenable blow-up structures and the
statement and proof of a lemma which will connect this structure to
the PDE context of \ref{sec:mainresult}.  We denote by $\log^m(x)$
an iterated composition of the logarithm with itself $m-1$ times,
that is, $\log^m(x)=\log\circ\cdots\circ \log(\sqrt{e_m+x^2})$ where
$m$ applications the logarithm are carried out and $e_m$ is a power
of $e$ defined so that $\log^m(0)=0$. 
We will use the following notation when defining functions of $(x,t)$ possessing singularities at time $T$ in the spatial set $S\subset \R^3$, \[Q_T(x_0,R,S)= B(x_0,R)\times(0,T) \cup((B(x_0,R)\setminus S)\times \{T\}~ \mbox{and}~\Omega_T(S)=\R^3\times (0,T) \cup (\R^3\setminus S)\times \{T\},\]where $R>0$ and $x_0\in \R^3$.  In contexts where $x_0$ and $R$  are fixed we will make the abbreviation $Q_T(S)=Q_T(x_0,R,S)$.

%***********************************************************************
%
%  Blow up rates and discussion
%
%***********************************************************************

\begin{definition}\label{blowup}Fix $x_0\in \R^3$, $R,T>0$, $m \in \N_0$ and a set of measure zero, $S$, which is contained in a compact subset of $B(x_0,R)$.
\begin{itemize}\item[a.] An {\em amenable blow-up rate of order $m$ on $Q_T(S)$} is
a function, $D(x,t):\Omega_T(S)  \to [0,\infty)$, which additionally satisfies,
\begin{itemize}
\item[i.] there exists $M_0>0$ so that $ \sup_{0< t\leq T}||\log^{m+1} D(x,t)||_{BMO}<M_0$,
\item[ii.] there exist $M_1, M_2>0$ so that $1/M_1\leq D(x,t)$ on $Q_T(S)$ and
$D(x,t)\leq M_2$ on $B(x_0,R)^c\times (0,T]$.
\end{itemize}
\item[b.] \label{def:blowup} A function $f(x,t):Q_T(S)\to \R$ {\em exhibits an
amenable blow-up rate of order $0$ on $Q_T(S)$} if there exists an
amenable blow-up rate of order $0$, $D(x,t)$, on $Q_T(S)$, so that, for
some $C_*>1$,
\[D(x,t)\leq |f(x,t)|\leq C_*D(x,t)\quad \mbox{for all}~(x,t)\in Q_T(S).\]
\item[c.] \label{def:blowup2} A function $f(x,t):Q_T(S)\to \R$ {\em exhibits an
amenable blow-up rate of order $1$ on $Q_T(S)$} if there exists an
amenable blow-up rate of order $1$, $D(x,t)$, on $Q_T(S)$, so that, for
some $C_*>0$, $A\geq 1$, and a scalar function $\alpha:[0,T]\to
[1,A]$,
\[D(x,t)\leq |f(x,t)|\leq C_*D(x,t)^{\alpha(t)}\quad \mbox{for all}~(x,t)\in Q_T(S).\]
\end{itemize}
\end{definition}

The condition (\ref{blowup}.a.i) will prove crucial in establishing our
estimates in Section \ref{sec:mainresult}.  At face value, however,
it is not clearly motivated. To address this we specify an
expansive class of functions which are simultaneously amenable
blow-up rates of order $0$ and are reasonable blow-up scenarios
given what is known about the structure of possible singularities in
weak solutions of 3D NSE.  In the following, $\R[x_1,x_2,x_3]$ denotes the ring of tri-variate polynomials with coefficients in $\R$.

\begin{definition}\label{def:algblowup}Use the notations of Definition \ref{blowup} and fix $d\in \N$. The function
$D(x,t):\Omega_{T}(S)\to [0,\infty)$ is an {\em algebraic blow-up
rate of degree $d$ on $Q_T(S)$} if there exist functions $\tau:(0,T]\to
[0,\infty)$,  $\alpha:(0,T]\to (0,\infty)$, and
$\rho(x,\cdot):(0,T]\to \R[x_1,x_2,x_3]$ so
that,\[D(x,t)=\bigg(\frac 1
{|\rho(x,t)|+\tau(t)}\bigg)^{\alpha(t)}~\mbox{on}~Q_T(S),\] and we
additionally have,
\begin{itemize}
\item[i.] for all $t\in (0,T]$, $\rho(x,t)$ is a polynomial of degree less than or equal to $d$ and the zeros of $\rho(x,T)$ are contained in $S$,
\item[ii.] there exists $A\geq 1$ so that $\alpha$ takes values in $[A^{-1},A]$,
\item[iii.] $\tau(t)>0$ for $t\in (0,T)$ and vanishes as $t$ approaches $T$,
\item[iv.] the condition (\ref{def:blowup}.a.ii.) is satisfied.
%\item the zeros of $\rho(x,T)$ form a compact subset of $Q_0$.
\end{itemize}
\end{definition}

\begin{remark}\label{remark:algblowup} We have presented these algebraic blow-up
rates because they constitute a concrete class of amenable blow-up
rates of order $0$. To verify this, we check that condition
(\ref{def:blowup}.a.i.) is satisfied, which is clear if we first
expand $\log D(x,t)$ as,
\[\log(D(x,t))=-\alpha(t) \log\big(|\rho(x,t)|+\tau(t)\big),\]and, additionally observe that,
\[\max\{\log|\rho(x,t)|,\log\tau(t)\}\leq \log(|\rho(x,t)|+\tau(t))
\leq \log(2)+\max\{\log|\rho(x,t)|,\log\tau(t)\}.\] Recalling the
fact that, if $f,g\in BMO$, then $||\max \{ f,g \}||_{BMO}\leq
2(||f||_{BMO}+||g||_{BMO})$ ($BMO$ is a lattice),  we are able to conclude by applying the
implication \eqref{implication:bmointerp} in conjunction with Lemma
\ref{lemma:bmoandpolynomials}.
\end{remark}

Our main lemma contextualizes amenable blow-up rates to their
application in Section \ref{sec:mainresult}.  As presently defined,
the blow-up rates require that $f(x,t)$ (as given in Definition
\ref{def:blowup}) is bounded away from zero.  To accommodate
functions possibly  {\em not} bounded away from zero we employ an
auxiliary function, $F=\sqrt{e_m+f^2}$, where $e_m$ is determined by
the order of the amenable blow-up rate in question. Based on the
sign of $f$ we define,
\begin{align}\label{decomp:F}F_+ =\begin{cases}
F, & \text{if }f\text{ is non-negative,} \\
1, & \text{otherwise, }
\end{cases} \qquad F_-=\begin{cases}
1, & \text{if }f\text{ is non-negative,} \\
F, & \text{otherwise. }\end{cases}
\end{align}The factorization $F^{\sign f}=F_+(F_-)^{-1}$ will allow us to
independently impose blow-up assumptions on a function's positive and negative components.

%***************************************************************
%
%   MAIN LEMMA
%
%***************************************************************

\begin{lemma}\label{lemma:uniformbmo}Fix $R_0>0$.  
Let $S'\subset \R^3$ be a set of Lebesque measure zero for which $S'\cap B(0,R_0)\subset B(0,R_0')$ where $0<R_0'<R_0$.  Suppose the function $f:\Omega_T(S') \to \R$ satisfies
$f\in L^\infty ((0,T];L^1(B(0,R_0)))$. Fix $\phi \in C_0^1(\R^3)$ so that $\phi$ is supported on $B(0,R)$ and let $S$ be a set of measure zero satisfying $S'\cap B(0,R_0)\subset S \subset B(0,R_0)$.
\begin{itemize}\item[a.] If $f|_{Q_T(S)}$ exhibits an amenable blow-up of order $0$ in $Q_T(S)$, then,
\begin{align*}\sup_{0<t\leq T}\big|\big|\phi(x)\log |f(x,t)| \big|\big|_{BMO}<\infty.\end{align*}
\item[b.]If $F_+|_{Q_T(S)}$ and $F_-|_{Q_T(S)}$ exhibit amenable blow-up rates of order $0$ in $Q_T(S)$,
then, \[\sup_{0<t\leq T}\bigg|\bigg|\phi(x) \frac {f(x,t)}{F(x,t)}\log
F(x,t) \bigg|\bigg|_{BMO}<\infty.\]
\item[c.]If $F_+|_{Q_T(S)}$ and $F_-|_{Q_T(S)}$ exhibit amenable blow-up rates of order $1$ in $Q_T(S)$,
then, \[\sup_{0<t\leq T}\bigg|\bigg| \phi(x)\frac
{f(x,t)}{F(x,t)}\log\log F(x,t) \bigg|\bigg|_{BMO}<\infty.\]
\end{itemize}
\end{lemma}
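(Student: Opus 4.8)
The three parts are parallel: in each case we have a function $f$ that is merely $L^\infty_tL^1_x$ on $B(0,R_0)$ (hence with uncontrolled $BMO$ norm a priori), but which on the punctured cylinder $Q_T(S)$ is sandwiched between powers of an amenable blow-up rate $D(x,t)$ whose iterated logarithm has a time-uniform $BMO$ bound. The goal is to transfer the $BMO$ control from $D$ (or $\log D$) to the relevant nonlinear expression in $f$, multiply by a fixed cutoff $\phi$, and invoke the Iwaniec–Verde multiplier lemma (Lemma \ref{lemma:bmomultiplier}) to kill the lack of decay at infinity. The one genuinely global ingredient — and the only place the $L^1$ hypothesis is used — is bounding $|\,|B|^{-1}\int_B h\,dx\,|$ in that lemma for the three choices of $h$; everything else is a local, pointwise estimate on $B(0,R_0)$.

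\textbf{Part (a).} Here $D\le |f|\le C_*D$ on $Q_T(S)$, so $\log D\le \log|f|\le \log D+\log C_*$ pointwise on $Q_T(S)$, i.e. on $B(0,R)\times(0,T]$ off a null set. Since $D$ is an amenable blow-up rate of order $0$, condition (\ref{blowup}.a.i) gives $\sup_t\|\log D(\cdot,t)\|_{BMO}<M_0$. By the elementary $BMO$-comparability implication \eqref{implication:bmointerp} we get $\sup_t\|\log|f(\cdot,t)|\|_{BMO}\le M_0+\log C_*$; strictly speaking one should run this on $B(0,R_0)$ (where $|f|$ is locally integrable) and note that $S\subset B(0,R_0)$ has measure zero so the sandwich bound holds a.e. on all of $B(0,R_0)$. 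Then $\phi\log|f|$ is handled by Lemma \ref{lemma:bmomultiplier}: its $BMO$ norm is controlled by $C\|\nabla\phi\|_\infty(\|\log|f(\cdot,t)|\|_{BMO}+|\,|B|^{-1}\int_B\log|f|\,dx\,|)$, and the average term is bounded uniformly in $t$ because $1/M_1\le D\le C_*|f|$ gives a lower bound $\log(1/(C_*M_1))$ on $\log|f|$ over $B$, while Jensen applied to $\log$ against the $L^1$ control of $f$ (or rather of $\log^+|f|\le |f|$) gives the upper bound $|B|^{-1}\int_B\log|f|\le \log(1+|B|^{-1}\|f(\cdot,t)\|_{L^1(B(0,R_0))})$, uniformly in $t$. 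This yields the claimed supremum bound.

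\textbf{Parts (b) and (c).} Write $F=\sqrt{e_m+f^2}$ (with $m=1$ in (b), $m=2$ in (c)) and use the factorization $F^{\operatorname{sign}f}=F_+ F_-^{-1}$, so that $\tfrac{f}{F}\log^m F = \operatorname{sign}(f)\log^m F = \log^m F_+ - \log^m F_-$ pointwise (using $\log^m 1=0$). Thus the expression to be estimated is a difference of two terms of the same type, one built from $F_+$ and one from $F_-$, and by symmetry it suffices to treat $\phi\log^m F_+$. On $Q_T(S)$, $F_+$ exhibits an amenable blow-up of the appropriate order: in (b), $D\le F_+\le C_*D$ with $\sup_t\|\log\log D\|_{BMO}<M_0$, so $\log\log D\le \log\log F_+\le \log\log(C_*D)\le \log(\log C_* + \log D)$, and on $Q_T(S)$ we have $\log D\ge \log(1/M_1)$ bounded below, so — after possibly absorbing a constant into $m$-th logarithm monotonicity and using the same $\max$/$BMO$-lattice trick as in Remark \ref{remark:algblowup} to compare $\log(\log C_*+\log D)$ with $\log\log D + (\text{bounded})$ — the comparability \eqref{implication:bmointerp} transfers the uniform $BMO$ bound from $\log\log D$ to $\log\log F_+$. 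In (c) the blow-up is of order $1$, so $D\le F_+\le C_*D^{\alpha(t)}$ with $\alpha\in[1,A]$ and $\sup_t\|\log^3 D\|_{BMO}<M_0$; then $\log D\le \log F_+\le \log C_* + \alpha(t)\log D$, hence $\log\log F_+$ lies between $\log\log D$ and $\log(\log C_* + A\log D)$, and again on $Q_T(S)$, $\log D$ is bounded below so one more application of the $\max$-lattice comparison plus \eqref{implication:bmointerp} (now at the level of $\log^3$, using that $\|\log^3 D\|_{BMO}$ is the controlled quantity since $\log\log F_+ - \log\log D$ is being compared to things of the form $\log(c + \log D)$) gives $\sup_t\|\log\log F_+(\cdot,t)\|_{BMO}<\infty$. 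Finally Lemma \ref{lemma:bmomultiplier} produces the cutoff version, with the $|B|^{-1}\int_B$ term bounded exactly as in (a): the integrand is bounded below by a constant on $B(0,R_0)$ (since $F_\pm\ge 1$ always and $F_\pm\ge 1/M_1$ on $Q_T(S)$) and bounded above by iterated Jensen against $\|f\|_{L^\infty_tL^1}$, e.g. $|B|^{-1}\int_B\log\log F_+\le \log\log(c+|B|^{-1}\|f(\cdot,t)\|_{L^1(B(0,R_0))})$, uniformly in $t$.

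\textbf{Main obstacle.} The routine bookkeeping is the repeated use of \eqref{implication:bmointerp}; the one point requiring care is that in (b) and (c) the quantity with the a priori uniform $BMO$ bound is $\log^{m+1}D$, whereas the naive sandwich for $F_+$ after taking $m$ logarithms produces not $\log^m D$ plus a \emph{bounded additive} term but rather $\log^{m-1}$ of $(\text{const} + \log D)$ or of $(\text{const}\cdot\alpha(t) + \log D\cdot\alpha(t))$ — a multiplicative, time-dependent perturbation inside the logarithm. Converting this into an additive perturbation requires that $\log D$ be bounded \emph{below} (so that $\log(c\log D)=\log c+\log\log D$ with the error absorbed into the next logarithm) and the $BMO$-lattice inequality to split the $\max$; that $\log D\ge \log(1/M_1)$ on $Q_T(S)$ is precisely condition (\ref{blowup}.a.ii), so the hypotheses are exactly calibrated for this. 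Making this absorption rigorous at each level of iterated logarithm — in particular checking that $\alpha(t)\in[1,A]$ keeps the perturbation bounded uniformly in $t$ — is the technical heart of the proof; once past it, the Iwaniec–Verde step and the $L^1$-to-average estimate are standard.
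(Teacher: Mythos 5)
Your argument is correct in substance and is essentially the paper's proof: sandwich the target expression between $\log^{m+1}D$ plus bounded additive errors, invoke \eqref{implication:bmointerp}, reduce (b) and (c) to (a) via the factorization $F^{\sign f}=F_+(F_-)^{-1}$, use $D\geq 1/M_1$ on $Q_T(S)$ to turn the power $\alpha(t)\in[1,A]$ into a bounded additive perturbation of $\log\log D$, and finish with Lemma \ref{lemma:bmomultiplier}, the average term being controlled by the $L^1$ hypothesis together with the lower bound on $|f|$ (the paper additionally extends $|f|$, resp.\ the expression in $F_\pm$, by a constant outside $B(0,R_0)$ so that the function fed to the multiplier lemma is globally defined on $\R^3$). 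Two slips to correct: $\frac{f}{F}\log^m F$ is \emph{not equal} to $(\sign f)\log^m F$, but differs from it by at most $1$ pointwise (the paper records this bound explicitly before applying \eqref{implication:bmointerp}, and your own framework absorbs it the same way); and for an order-$1$ rate the uniformly controlled quantity is $\sup_t\|\log^2 D\|_{BMO}$, not $\|\log^3 D\|_{BMO}$ --- the additive comparison in part (c) takes place at the level of $\log\log$, exactly as in the chain of inequalities you actually write down, so the parenthetical about working ``at the level of $\log^3$'' should be deleted.
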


\begin{proof}
{\em (Part a.)} Let $D(x,t)$ be the amenable blow-up rate exhibited
by $f|_{Q_T(S)}$.  We will freely reference the constants associated with $D$
in the statement of Definition \ref{def:blowup} and subsequently make the abbreviations $Q_T(S)=Q_T(0,R_0,S)$ and $B=B(0,R_0)$. Define an
extension, $\widetilde F$, of $f|_{Q_T(S)}$ to $\Omega_T(S)$ by $\widetilde
F=|f|$ on $Q_T(S)$ and  $\widetilde F =1$ on $B ^c\times (0,T]$. Set $\widetilde
D(x,t)=\max \{ D(x,t),M_1^{-1} \}$ and $\widetilde
C_*=\max\{C_*,M_2\}$.  These definitions ensure that, for all
$(x,t)\in\Omega_T(S)$,\[\frac 1 {\widetilde C_{*}}\widetilde
D(x,t)\leq\widetilde F(x,t)\leq \widetilde C_{*}\widetilde
D(x,t),\]and, taking logarithms,
\[ \log\big( \widetilde D(x,t)\big)-\log \widetilde C_{*}
\leq \log\big(\widetilde F(x,t)\big)\leq  \log\big(\widetilde
D(x,t)\big)+\log \widetilde C_{*}.\] Recalling the implication
\eqref{implication:bmointerp}, we see that,\[||\log \widetilde
F(x,t)||_{BMO}\leq C(M_0,\tilde C_*).\]  Because $|f|=\widetilde F$ on the
support of $\phi$ we have,
\begin{align*}|| \phi\log |f|\, ||_{BMO}&= ||\phi \log \widetilde F||_{BMO}
\\&\leq C \, ||\nabla \phi||_\infty \bigg( ||\log \widetilde F||_{BMO}+ \bigg|
\frac 1 {|B|}\int_B \log \widetilde F ~dx \bigg|\bigg),
\end{align*}
where we have applied Lemma \ref{lemma:bmomultiplier}. The average
appearing above is uniformly bounded in time; this follows from the
facts that $M_1^{-1}\leq |f|$ and $f\in L^\infty
((0,T];L^1(B(0,R_0)))$.
%****************************************************************************
%****************************************************************************
\\\smallskip

{\em (Part b.)}  We begin by remarking that, for all $y\in \R$,
\[(\sign y) \log(\sqrt{1+y^2})-1\leq \frac y {\sqrt{1+y^2}}
\log(\sqrt{1+y^2})\leq (\sign y) \log \sqrt{1+y^2})+1.\] Taking
$y=|f(x,t)|$, in light of implication \eqref{implication:bmointerp}
it is sufficient to work with the function $(\sign f)\log F$, where
$F$, and also $F_+$ and $F_-$, are defined as in the comments
preceding the statement of the lemma. Those
definitions were motivated by the fact that $F^{\sign
f}=F_+(F_-)^{-1}$ and, so,  \[\log \big(F^{\sign f}
\big)=\log(F_+)-\log(F_-).\]

Let $G$ be the extension by $1$ of $(fF^{-1}\log(F))|_{Q_T(S)}$ from
$Q_T(S)$ to $\Omega_T(S)$ and adopt the notation from the proof of part a.~of
this lemma for definitions of functions analogous to those mentioned
above where we designate by $D_\pm$ the blow-up rates exhibited by
$F_\pm$.   Then, on $\Omega_T(S)$, we have,
\[\log(\widetilde F_+)-\log(\widetilde F_-)-1\leq G\leq
\log(\widetilde F_+)-\log(\widetilde F_-)+1,\]and, therefore,
\begin{align*}||G||_{BMO}&\leq  ||\log(\widetilde F_+)||_{BMO}+||\log(\widetilde F_-)||_{BMO} + 2.
\end{align*}
Appealing to part a. of the lemma, the dominating quantity is itself bounded uniformly in time.
Extending this to the desired estimate proceeds directly,
\begin{align*}\bigg|\bigg|\phi \frac f {F} \log(F)\bigg|\bigg|_{BMO}&\leq C \, ||\nabla \phi||_\infty
\bigg(||G||_{BMO}+ \bigg|\frac 1 {B}\int_B \log \widetilde G ~dx \bigg| \bigg),\end{align*}
which, again, is finite by assumptions on $f$.
%****************************************************************************
%****************************************************************************
\\\smallskip

{\em (Part c.)} We begin similarly, noting,
\[(\sign f)\log \log F-1\leq \frac {f} {F}\log \log F\leq(\sign f)\log \log F +1.\]This
 leads us to consider, \[\log \bigg( \big(\log F\big)^{\sign f}\bigg)=\log
 \bigg( \big(\log F\big)_+\bigg)+\log\bigg( \big(\log F\big)_-\bigg),\]
where, \[ \big(\log F\big)_+=\max\bigg\{\big(\log F\big)^{\sign
f},1\bigg\} \quad\mbox{and}\quad  \big(\log
F\big)_-=\min\bigg\{\big(\log F\big)^{\sign f},1\bigg\}.\] It is easy to check that $(\log
F)_\pm=\log(F_\pm)$, and, therefore,\[\frac {f} {F}\log \log F=
\log\log(F_+)-\log\log(F_-).\] A further observation is that, taking
$D_{\pm}(x,t)$ to be the amenable blow-up rates exhibited by
$F_{\pm}$, and setting $k_\pm^*=\log(A_\pm M_{1,\pm}+\log
C_{*,\pm})$ (these are constants associated with $D_\pm$ as in
Definition \ref{def:blowup}.a.), we have for $(x,t)\in Q_T$,
\[\log\log (C_*D_\pm(x,t)^{\alpha_\pm(t)})\leq  \log\log(D_\pm(x,t)
)+k_\pm^*,\]and, recalling the comparability condition from
Definition \ref{def:blowup}.c., this implies that,
\[\log\log(D_\pm(x,t))\leq F_\pm \leq \log\log(D_\pm(x,t) )+k_\pm^*.\]
The conclusion now follows in the same fashion laid out in part b.; we omit the details.
\end{proof}

%*******************************************************************
%***
%***    MAIN RESULTS
%***
%********************************************************************

\section{Main Results}\label{sec:mainresult}

Weak solutions to the 3D incompressible Navier-Stokes equations are
functions which satisfy (distributionally) the following system of
PDEs,
\begin{equation}
\label{eq:NSE}
\left\{  \begin{array}{c}
                       \partial_t u +(u\cdot\nabla)u = -\nabla p + \nu \Delta u, \\
                        \nabla\cdot u=0; \quad u(x,0)=u_0\in H, \\
           \end{array}  \right.\end{equation}
where $H$ is the $L^2$ closure of the divergence free test functions and the
initial datum is understood in the sense of weak continuity
(cf. \cite{CF} for details). The evolution of the
vorticity, $\omega=\nabla\times u$, is of special interest to us and
satisfies,
\begin{equation}
\label{eq:vorticity}
\left\{  \begin{array}{c}
                       \partial_t \omega + (u\cdot \nabla) \omega = (\omega\cdot \nabla)
                       + \nu \Delta \omega, \\
                        \nabla\cdot \omega=0. \\
           \end{array}  \right.
\end{equation}

For simplicity, we consider a weak solution on $\R^3\times
(0,\infty)$, evolving from the initial data $u_0$.  We also require that the initial vorticity, $\omega_0 = \nabla \times u_0$, is in $
L^1 \cap L^2$.

The standard regularity results for weak solutions (cf. \cite{CF}) consist of the {\em a
priori} bounds,\[\sup_{0<t<T'}||u||_{L^2(\R^3)}^2 <\infty
\quad\mbox{and} \quad \int_0^{T'} ||\nabla
u||_{L^2(\R^3)}^2~dt<\infty,\] for any $T' > 0$. In addition, since
$\omega_0 \in L^1$, a result from \cite{Co90} ensures that,
\[
\sup_{0\leq t\leq T'}||\omega||_{L^1(\R^3)} <\infty.
\]
Note that since $\omega_0 \in L^2$, our weak solution
locally-in-time coincides with the smooth solution; let $T$ be the
first (possible) singular time.

Fix a `macro-scale,' $R_0>0$, with the property
that the intersection of $B(0,R_0) \times \{T\}$ with the singular set at time $T$ is nonempty. Fix $0<\epsilon<R_0$. Our estimates are intended for integrals over the spatial set $B(0,R_0-\epsilon)$ and localization is
achieved via multiplying equation \eqref{eq:vorticity} by a smooth
cut-off function $\psi$ satisfying,
\[\supp \psi \subset B(0,R_0
),\quad \psi=1~\mbox{on}~B(0,R_0-\epsilon),\quad\frac {|\nabla
\psi|} {\psi^\rho}\leq \frac c \epsilon~\mbox{for some}~\rho\in
(0,1)\quad \mbox{and}\quad 0\leq \psi\leq 1.\] Instead of studying
the evolution of $|\omega|$ and $|\omega_k|$ directly we introduce
an auxiliary function.  This approach is an adaptation of that taken
by Constantin in \cite{Co90}. Define $q(y)=\sqrt{1+y^2}:\R\to \R$
and let $w_k=q( \omega_k):\R^3\to \R$. From these definitions it is
immediate that,
\begin{align}\label{list:factsaboutq}|\omega_k|\leq w_k,\quad -1\leq q'(\omega_k)=
\frac {\omega_k} {w_k}\leq 1, \quad\mbox{and} \quad0< q''(
\omega_k)=\frac {1} {w_k^3}\leq 1.
\end{align}
For convenience we recall the following elementary facts (for $y\geq 1$),
\begin{align}\label{list:factsaboutlog}0\leq -\Log''(y)\leq \Log'(y)\leq 1,\quad
y\Log'y= 1,\quad \mbox{and}\quad 0= \Log'(y)+y\Log''(y).
\end{align}
To allow $\omega_k$ to have varying sign we define the functions
$w_{k,+}$ and $w_{k,-}$ in the same manner as \eqref{decomp:F}. For convenience we recall the notation 
$\Lambda_t(y)=\{x:|\omega(x,t)|\geq y\}$. Also, throughout the remainder of this paper, $c_1$ denotes a fixed constant taken to be greater than one. The role of $c_1$ lies in specifying the threshold for the super-level sets $\Lambda_t(y)$ as fractions of the supremum norm of the modulus of the vorticity at the time $t$, i.e. $y=c_1^{-1}||\omega(\cdot,t)||_\infty$.

We include two theorems, one each for the case of solutions
possessing vorticity components which exhibit amenable blow-up rates
of order $0$ and of order $1$.  The proof of the
order $0$ case extends easily to the case of blow-ups of order $1$
and the proof of the second result is accordingly terse. 

%******************************************************************
% MAIN RESULT
%******************************************************************
\begin{theorem}\label{thrm:main}Let $u$ be a Leray solution to
\eqref{eq:NSE} on $\mathbb{R}^3 \times (0,\infty)$, and suppose additionally that $\omega_0 = \nabla \times u_0 \in L^1 \cap L^2$.
Denote by $T$ the first singular time and by $S$ the singular set of $\omega$ at time $T$.  Fix positive values $R_0$ and $\epsilon$ so that $\emptyset\neq S\cap B(0,R_0)\subset B(0,R_0-\epsilon)$ -- i.e. there are singular points in $B(0,R_0)$ but all such points are in $B(0,R_0-\epsilon)$ -- and let $Q_T(S)=Q_T(0,R_0,S)$.
\begin{itemize}\item[(i)] If $w_{k,+}|_{Q_T(S)}$ and $w_{k,-}|_{Q_T(S)}$ each exhibit amenable blow-up rates of
order $0$ in $Q_T(S)$, then there exists a positive value $M_k$ for which, \begin{align*}\sup_{t\in [0,T]}
\int_{B(0,R_0-\epsilon)}|\omega_k(x,t )| \log
\big(\sqrt{1+|\omega_k(x,t)|^2}\big)~dx<M_k.
\end{align*}
\item[(ii)] Let $c_1$ ba a fixed constant which is greater than $1$. If, for all $k$, the premises of part (i) are satisfied, then there exists a positive constant $M_0$ so that, for $t\in [0,T]$,
 \begin{align*} \mbox{Vol}\bigg( \Lambda_t\bigg(\frac 1 {c_1}
 ||\omega(t)||_{L^\infty(B(0,R_0-\epsilon))}\bigg)\bigg) \leq \frac {M_0}
 {||\omega(t)||_{L^\infty(B(0,R_0-\epsilon))}\log \big(1+||\omega(t)||_{L^\infty(B(0,R_0-\epsilon))}\big)}.
 \end{align*}
\end{itemize}
\end{theorem}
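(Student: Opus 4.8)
The plan is to run a localized energy-type estimate for the quantity $\int \psi\, w_k \log w_k \, dx$ following Constantin's approach in \cite{Co90}, but with the logarithmic weight inserted, and to control the troublesome vortex-stretching contribution using the $\mathcal{H}^1$–$BMO$ duality together with Lemma \ref{lemma:uniformbmo}. Concretely, I would take the $k$-th component of the vorticity equation \eqref{eq:vorticity}, multiply by $\psi\, \sign(\omega_k)\, \log w_k$ (equivalently, differentiate $q$ and $\log$ and use \eqref{list:factsaboutq}–\eqref{list:factsaboutlog} to write things in terms of $w_k$), and integrate over $\R^3$. The diffusion term $\nu\Delta\omega_k$ produces, after integration by parts, a good (sign-definite) term from the convexity of $y\mapsto q(y)\log q(y)$ plus commutator terms involving $\nabla\psi$ that are lower order and absorbed using the cut-off bound $|\nabla\psi|\leq (c/\epsilon)\psi^\rho$ and the a priori bound $\omega\in L^\infty_t L^1$. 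The advection term $(u\cdot\nabla)\omega_k$ integrates to something controlled by $\|\nabla u\|_{L^2_{t,x}}$-type quantities together with the $L^1$ bound, again with harmless cut-off commutators. The key point is the vortex-stretching term $(\omega\cdot\nabla)u_k$.

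For that term, after multiplying by $\psi\,(\omega_k/w_k)\log w_k$, I would write it (schematically) as $\int \big(\phi\,\tfrac{\omega_k}{w_k}\log w_k\big)\,\big((\omega\cdot\nabla)u_k\big)\,dx$ with $\phi$ a cut-off slightly larger than $\psi$. The factor $(\omega\cdot\nabla)u_k$ has div-curl structure: $\omega$ is divergence-free and $\nabla u_k$ is (componentwise) a gradient, so the Div-Curl lemma gives $(\omega\cdot\nabla)u_k \in \mathcal H^1$ with $\|(\omega\cdot\nabla)u_k\|_{\mathcal H^1}\lesssim \|\omega\|_{L^2}\|\nabla u\|_{L^2}$, hence integrable in time by the a priori bounds (modulo the usual issue that $\omega\in L^2$ only locally in time up to $T$, which is fine since $T$ is the first singular time and we are on $[0,T)$, extended to $T$ by the uniform $BMO$ bound and Fatou). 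Pairing this against the multiplier $\phi\,\tfrac{\omega_k}{w_k}\log w_k$ via $\mathcal H^1$–$BMO$ duality reduces matters to a \emph{time-uniform} bound on $\big\|\phi\,\tfrac{\omega_k}{w_k}\log w_k\big\|_{BMO}$. Writing $w_k = \sqrt{1+\omega_k^2}$ and noting $\tfrac{\omega_k}{w_k}\log w_k$ differs from $(\sign \omega_k)\log w_k$ by a bounded function (the elementary inequality recalled in the proof of Lemma \ref{lemma:uniformbmo}, part b), this is \emph{exactly} the quantity controlled by Lemma \ref{lemma:uniformbmo}(b) under the hypothesis that $w_{k,\pm}$ exhibit amenable blow-up rates of order $0$. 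So the structural hypothesis is precisely what is needed to close the estimate with a time-independent constant.

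Assembling these pieces yields a differential inequality of the form $\tfrac{d}{dt}\int \psi\, w_k\log w_k\,dx \leq C\big(1 + h(t)\big)\big(1 + \int\psi\,w_k\log w_k\,dx\big)$ with $h\in L^1(0,T)$ (coming from the $\|\omega\|_{L^2}\|\nabla u\|_{L^2}$ and $\|\nabla u\|_{L^2}^2$ terms), and Grönwall gives the uniform bound in part (i), since $\psi\equiv 1$ on $B(0,R_0-\epsilon)$ and $w_k\log w_k$ dominates $|\omega_k|\log\sqrt{1+|\omega_k|^2}$ up to a constant. For part (ii): on $\Lambda_t\big(c_1^{-1}\|\omega(t)\|_\infty\big)\cap B(0,R_0-\epsilon)$ at least one component $\omega_k$ satisfies $|\omega_k|\gtrsim c_1^{-1}\|\omega(t)\|_\infty$, so the integrand $|\omega_k|\log\sqrt{1+|\omega_k|^2}$ is bounded below by $c\,\|\omega(t)\|_\infty\log\big(1+\|\omega(t)\|_\infty\big)$ there; integrating this lower bound over the super-level set and comparing with the $M_k$ bound from part (i) (summed over the finitely many components, with $M_0$ chosen accordingly) gives the stated volume decay. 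The main obstacle is making the vortex-stretching estimate fully rigorous: controlling the cut-off commutators generated when localizing the Div-Curl structure (one genuinely needs a slightly larger cut-off $\phi$ with $\phi\psi = \psi$, and must check the extra terms involving $\nabla\phi$ are lower order), and handling the behavior at the singular time $T$ itself — the $BMO$ bound is uniform on $(0,T)$, so the $\mathcal H^1$–$BMO$ pairing is uniformly controlled, and the final bound at $t=T$ follows by lower semicontinuity; I would state these technical points carefully rather than suppress them.
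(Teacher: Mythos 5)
Your proposal follows essentially the same route as the paper's own proof: a localized energy-type estimate for $\int \psi\, w_k \log w_k\,dx$ in the spirit of Constantin, with the diffusion and advection terms handled by integration-by-parts cancellations and cut-off commutators, and the vortex-stretching term closed via the Div-Curl lemma, $\mathcal H^1$--$BMO$ duality, and the time-uniform $BMO$ bound from Lemma \ref{lemma:uniformbmo}(b); your part (ii) is the same Chebyshev-type lower bound on the super-level set combined with the componentwise bounds $M_k$. The argument is correct and the technical caveats you flag (localization of the multiplier, behavior at $t=T$) are handled in the paper in just the way you describe.
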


\begin{remark}
As will be seen, our energy inequality-type method depends heavily
on classical techniques and we thus need to consider smooth
solutions.  It is possible to extend the estimates to {\em some}
weak solutions by considering a sequence of smooth approximations --
a good choice to use here is the method of retarded mollifiers given
for suitable weak solutions  in \cite{CKN-82} as it allows one to
recover from a velocity-level approximation scheme information about
the vorticity, see \cite{Co90} for more discussion -- but, as our
estimates hinge on assumptions beyond the initial data, $u_0$, we
would have to ensure these are met by a convergent sub-sequence of
approximate solutions.
\end{remark}

\begin{proof}
%******************************************************************
%ENERGY INEQUALITY-TYPE ESTIMATE
%******************************************************************
In virtue of our smoothness assumption, the evolution of $\psi
w_k\Log (w_k)$ can be established from the evolution of $\omega_k$
by first writing,
\begin{align*}&\partial_t\big(\psi w_k\Log w_k \big)
\\&=  \psi q'(\omega_k)\big( \Log w_k + w_k\Log'w_k\big)\big( \partial_t \omega_k \big)
\\&= \psi q'( \omega_k)\big( \Log w_k + w_k\Log'w_k\big)\big(\nu\Delta \omega_k -(u\cdot \nabla )
\omega_k+(\omega\cdot \nabla) u_k\big),
\end{align*}
and then deriving (noting tacit summation over terms involving
indices other than $k$),
\begin{align*}&\partial_t(\psi w_k \Log w_k)
-\nu \psi  \Delta w_k \big(\Log w_k + w_k\Log'w_k  \big) +\nu\psi
q''(\omega_k) (\partial_i \omega_k)^2\big(\Log w_k + w_k\Log'w_k
\big)
\\\notag &= \psi(\omega\cdot \nabla) u_k q'(\omega_k)\big(\Log w_k + w_k\Log'w_k  \big)
- \psi (u\cdot \nabla) w_k\big(\Log w_k + w_k\Log'w_k\big).
\end{align*}
By integrating in space and time and dropping the positive quantity
involving $q''$ from the left hand side, we obtain the following
energy inequality-type estimate,
\begin{align}\label{ineq:rawEnergyEstimate}&\int \psi(t) w_k(t)\Log w_k(t) ~dx -\nu
\int_0^t\int \psi  \Delta w_k \big(\Log w_k + w_k\Log'w_k
\big)~dx~ds
\\\notag &\leq \int_0^t\int \psi(\omega\cdot \nabla) u_k q'( \omega_k)
\big(\Log w_k + w_k\Log'w_k  \big)~dx~ds
\\\notag &\quad -\int_0^t\int \psi (u\cdot \nabla) w_k\big(\Log w_k + w_k\Log'w_k\big)~dx~ds
\\\notag &\quad +\int \psi\, w_{k,0}\Log w_{k,0}~dx.
\end{align}
The properties in \eqref{list:factsaboutlog} enable several key
cancellations. For the dissipative terms, integration by parts
reveals that,
\begin{align*} \int_0^t\int (\partial_j^2 w_k)\psi \Log w_k ~dx~ds&= \int_0^t\int
\bigg( w_k(\partial_j^2 \psi)\Log w_k +w_k (\partial_j \psi)(\partial_jw_k)\Log' w_k  \bigg)~dx~ds
\\&\quad - \int_0^t\int (\partial_j w_k)^2 \psi \Log' w_k  ~dx~ds,\end{align*}and,
\begin{align*}\int_0^t\int (\partial_j^2 w_k)\psi w_k\Log' w_k ~dx~ds&=-\int_0^t\int w_k
(\partial_j \psi)(\partial_jw_k)\Log' w_k ~dx~ds
\\&\quad -\int_0^t\int(\partial_j w_k)^2\psi \bigg(\Log' w_k  + w_k \Log'' w_k \bigg)~dx~ds.
\end{align*}A cancellation occurs upon adding the above equations leaving us with,
\begin{align*}&-\nu \int_0^t\int (\partial_j^2 w_k)\psi\bigg( \Log w_k + w_k\Log' w_k \bigg)~dx~ds
\\&=\nu \int_0^t\int (\partial_j w_k)^2 \psi\bigg(  2\Log' w_k +w_k\Log'' w_k\bigg)~dx~ds
 -\nu \int_0^t\int w_k(\partial_j^2 \psi)\Log w_k~dx~ds.\end{align*}Again noting
 \eqref{list:factsaboutlog}, the integrand of the first term is positive and, therefore,
 can be dropped from the left hand side of estimate \eqref{ineq:rawEnergyEstimate}. The
 second term can be dominated by an {\em a priori} finite quantity arising from the
 standard energy inequality for weak solutions in conjunction with the fact that our spatial
 integral is over a set of finite measure.  More precisely,
\begin{align}\label{ineq:trivial}\int_0^t\int w_k(\partial_j^2 \psi)\Log w_k~dx~ds&\leq
 C \int_0^T \int_{B(0,R_0)}w_k^2~dx~ds
\\\notag&= C \int_0^T \int_{B(0,R_0)}|\omega_k|^2~dx~ds+T|B(0,R_0)|.
\end{align}
The integrals arising from the transport term in
\eqref{eq:vorticity} also enjoy substantial cancellations,
\begin{align*}&\int_0^t\int \psi~u\cdot \nabla w_k  \bigg(\Log w_k+w_k \Log'w_k  \bigg)~dx~ds
\\&=\int_0^t\int \psi~ u_j(\partial_j w_k )w_k \Log' w_k ~dx~ds
- \int_0^t\int \psi~ u_j w_k (\partial_jw_k)\Log' w_k ~dx~ds
\\&\quad-\int_0^t\int u_jw_k\Log w_k \partial_j \psi~dx~ds\\
&\quad = -\int_0^t\int u_jw_k\Log w_k \partial_j \psi~dx~ds.
\end{align*}
Noting that $\log(y)\leq 4 \, y^{1/4}$,
\begin{align*}&
\bigg| \int_0^t\int u_jw_k\Log w_k |\partial_j \psi|~dx~ds \bigg|
\\&\leq C  \int_0^T \int \big(|\partial_j\psi|^{1/4}|u_j|\big) \big(|\partial_j
\psi|^{1/2}|w_k|\big)  |w_k \partial_j\psi|^{1/4} ~dx~ds
\\&\leq C\int_0^T ||\psi^{\rho/4}u||_4
||w_k ||_{L^1(B(0,R_0))}^{1/4}||w_k||_{L^2(B(0,R_0))}
\\&\leq C\sup_{0<t\leq T}||w_k ||_{L^1(B(0,R_0))}^{1/4}\int_0^T ||\nabla(\psi^{\rho/4}u)||_2
||w_k||_{L^2(B(0,R_0))},
\end{align*}where we have used H\"older's inequality and the Sobolev inequality.
From here and in light of \eqref{ineq:trivial}, a commutator estimate on the gradient of the localized
velocity allows the extension of the above estimate to
one in terms of {\em a priori} finite quantities.

The integral on the last line of the right hand side of
\eqref{ineq:rawEnergyEstimate} is finite by our assumptions on the
initial data.

At this point, based on \eqref{ineq:rawEnergyEstimate}, we have established,
\[\int (\psi w_k\Log w_k)(t)~dx  \leq  \bigg|\int_0^t\int
\psi ~\omega\cdot \nabla u_k \, \,q'( \omega_k ) \bigg( \Log w_k
+w_k\Log'w_k\bigg)~dx~ds\bigg|+R,\]where $R$ is comprised of those
{\em a priori} bounded quantities accumulated in the preceding
estimates.  Noting that,\[\psi q'(\omega_k)w_k\Log' w_k\leq 1,\] an
{\em a priori} bound follows for that part of the as of yet
unbounded quantity leaving us with,\[\int (\psi w_k\Log w_k)(t)~dx
\leq  \bigg|\int_0^t\int   \psi ~\omega\cdot \nabla u_k ~ \frac
{\omega_k} {w_k}\Log w_k  ~dx~ds\bigg|+R.\]

%******************************************************************
%TREATMENT FOR VORTEX STRETCHING TERM
%******************************************************************

The $\mathcal H^1$-$BMO$ duality and the Div-Curl lemma justify the
following chain of inequalities,
\begin{align*} &\bigg|\int_0^t\int   \bigg(\omega\cdot \nabla u_k \bigg)
\bigg( \psi\frac {\omega_k} {w_k}\Log w_k \bigg)  ~dx ~ds\bigg|
\\& \leq \int_0^T \big| \big|\omega\cdot \nabla u_k \big|\big|_{\mathcal H^1} \bigg|\bigg|\psi
\frac {\omega_k} {w_k}\Log w_k\bigg|\bigg|_{BMO} ~ds
\\&  \leq \bigg(\sup_{0<t\leq T} \bigg|\bigg|\psi\frac {\omega_k}
{w_k}\Log w_k\bigg|\bigg|_{BMO}\bigg) \int_0^T\big| \big|\omega\cdot
\nabla u_k\big|\big|_{\mathcal H^1}~ds
\\&\leq  \bigg(\sup_{0<t\leq T} \bigg|\bigg|\psi\frac {\omega_k}
{w_k}\Log w_k\bigg|\bigg|_{BMO}\bigg) \bigg(\int_0^T
||\omega||_2^2~dt\bigg)^{1/2} \bigg(\int_0^T||\nabla
u||_2^2~dt\bigg)^{1/2}. \end{align*} By Lemma \ref{lemma:uniformbmo}
and the standard regularity of Leray weak solutions all of the above
are finite and we have thus established that, for all $0<t\leq T$,
$||\psi w_k\Log w_k(t)||_{L^1(\R^3)}$ is majorized by
time-independent {\em a priori} bounded quantities, the sum of which
we label $M_k$. This completes our proof of part (i) of the theorem.

%******************************************************************
%PROOF OF PART (ii)
%******************************************************************
Part (ii) of the theorem is proven in two
steps. For the first, let, \[\lambda(t)=\frac 1 {c_1} ||\omega(t)||_{L^\infty(B(0,R_0-\epsilon))},\]and observe that, for any $x\in B(0,R_0-\epsilon)$ where $|\omega(x,t)|\geq \lambda(t)$, direct computation affirms that, \[1\leq \frac {c_1|\omega(x,t)|\big[\log(c_1)+\log\big(1+|\omega(x,t)|\big) \big]}{||\omega(t)||_{L^\infty(B(0,R_0-\epsilon))}\log \big(1+||
 \omega(t)||_{L^\infty(B(0,R_0-\epsilon))}\big)}.\]
This allows us to estimate the volume of the relevant super-level set of $|\omega|$ at time $t$,
 \begin{align*} \mbox{Vol}\bigg( \Lambda_t\bigg(\frac 1 {c_1} ||\omega(t)||_{L^\infty(B(0,R_0-\epsilon))} \bigg)\bigg) 
 &\leq \frac {c_1\log (c_1) ||\omega(t)||_{L^1(\{|\omega(x,t)|\geq
  \lambda(t)\}\cap B(0,R_0-\epsilon))}} {||\omega(t)||_{L^\infty(B(0,R_0-\epsilon))}\log \big(1+||
  \omega(t)||_{L^\infty(B(0,R_0-\epsilon))}\big)}
  \\& \quad+ \frac {c_1 ||\omega(t)\log(1+|\omega(t)|)||_{L^1(\{|\omega(x,t)|\geq
   \lambda(t)\}\cap B(0,R_0-\epsilon))} } {||\omega(t)||_{L^\infty(B(0,R_0-\epsilon))}\log \big(1+||
    \omega(t)||_{L^\infty(B(0,R_0-\epsilon))}\big)}
  \\&\leq \frac {K I_0(t) } {||\omega(t)||_{L^\infty(B(0,R_0-\epsilon))}\log \big(1+||
      \omega(t)||_{L^\infty(B(0,R_0-\epsilon))}\big)},
\end{align*}where we have set,
 \[I_0(t)=\int_{\{|\omega(x,t)|\geq
 \lambda(t)\}\cap B(0,R_0-\epsilon)}|\omega(x,t)| \log \big(1+|\omega(x,t)|\big)~dx,\] and have introduced a time-independent constant $K$  which depends on the fixed values $c_1$ and $R_0$ as well as the \emph{a priori} finite quantity $\sup_{0\leq t\leq T}||\omega(\cdot,t)||_{L^1(\{|\omega(x,t)|\geq
   \lambda(t)\}\cap B(0,R_0-\epsilon))}$. 

The second step ensures we can control $I_0(t)$ in terms of the finite bounds appearing in part (i) of the theorem. Tacitly summing over $j$, we have,
\begin{align*}I_0(t)&\leq C\int_{B(0,R_0-\epsilon)} |\omega_j|\log \big(1+\sqrt{\omega_1^2+\omega_2^2+\omega_3^2}\,\big)~dx.
\end{align*}An explicit reduction illustrates the argument (for simplicity we take $j=1$
and integrals over the indicated sets intersected with
$B(0,R_0-\epsilon)$),
\begin{align*}& \int |\omega_1|\log\big( 1+\sqrt{\omega_1^2+\omega_2^2+\omega_3^2}\,\big)~dx
\\&\leq \int_{\omega_1^2\leq \omega_2^2+\omega_3^2}  \sqrt{\omega_2^2+\omega_3^2}
\log\big( 1+ \sqrt{2( \omega_2^2+\omega_3^2)}\,\big)~dx+\int_{\omega_1^2> \omega_2^2+\omega_3^2}
|\omega_1| \log\big( 1+ \sqrt{2} |\omega_1|\big)~dx.
\end{align*}
Applying the same reasoning to the first integral above and then
repeating for all values of $j$ eventually yields,
\begin{align*}I_0(t)& \leq C\int w_i~dx +C\int w_i\log w_i~dx\leq C M_i.
\end{align*}
\end{proof}
 
The energy inequality-type construction used to prove the previous
theorem also works if we substitute $\log^m \omega_k$ (as defined in
Section \ref{sec:preliminaries}) in place of $\log w_k$. This allows
the application of amenable blow-up rates of order $1$ in
conjunction with Lemma \ref{lemma:uniformbmo}.c. To ensure things
are meaningful, we modify our definition of $q$ so that
$q(y)=\sqrt{e+y^2}$ (so, now, $w_k=\sqrt{e+\omega_k^2}$) and refer
to \eqref{decomp:F} to define $w_{k,+}$ and $w_{k,-}$.

\begin{theorem}\label{thrm:main2}Let $u$ be a Leray solution to
\eqref{eq:NSE} on $\mathbb{R}^3 \times (0,\infty)$, and suppose additionally that $\omega_0 = \nabla \times u_0 \in L^1 \cap L^2$. Denote by $T$ the first singular time and by $S$ the singular set of $\omega$ at time $T$. Fix positive values $R_0$ and $\epsilon$ so that $\emptyset\neq S\cap B(0,R_0)\subset B(0,R_0-\epsilon)$ -- i.e. there are singular points in $B(0,R_0)$ but all such points are in $B(0,R_0-\epsilon)$ -- and let  $Q_T(S)=Q_T(0,R_0,S)$.
\begin{itemize}\item[(i)] If $w_{k,+}|_{Q_T(S)}$ and $w_{k,-}|_{Q_T(S)}$ each exhibit amenable blow-up rates
of order $1$ in $Q_T(S)$, then there exists a positive value $M_k$ so
that, \begin{align*}\sup_{t\in [0,T]}
\int_{B(0,R_0-\epsilon)}|\omega_k(x,t )| \log \log
\big(\sqrt{e+|\omega_k(x,t)|^2}\big)~dx<M_k.
\end{align*}
\item[(ii)] Let $c_1$ ba a fixed constant which is greater than $1$. If, for all $k$, the premises of part (i) are satisfied, then there exists
a positive value $M_0$ so that, for $t\in [0,T]$,
 \begin{align*} \mbox{Vol}\bigg( \Lambda_t\bigg(\frac 1 {c_1}
 ||\omega(t)||_{L^\infty(B(0,R_0-\epsilon))}\bigg)\bigg) \leq \frac {M_0}
 {||\omega(t)||_{L^\infty(B(0,R_0-\epsilon))}\log \log \big(e+||\omega(t)||_{L^\infty(B(0,R_0-\epsilon))}
 \big)}. \end{align*}
\end{itemize}
\end{theorem}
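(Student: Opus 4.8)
The plan is to rerun the energy-inequality argument of Theorem \ref{thrm:main} with the profile $w_k\log w_k$ replaced by $w_k\log\log w_k$ and with $q(y)=\sqrt{e+y^2}$ as indicated above, so that $w_k=\sqrt{e+\omega_k^2}$ is bounded below by a constant and $\log w_k$ stays away from $0$ — the point of modifying $q$ being to keep the iterated logarithm well behaved. For part (i) I would first record, using smoothness, that
\[\partial_t\big(\psi\,w_k\log\log w_k\big)=\psi\,q'(\omega_k)\Big(\log\log w_k+\tfrac1{\log w_k}\Big)\big(\nu\Delta\omega_k-(u\cdot\nabla)\omega_k+(\omega\cdot\nabla)u_k\big),\]
convert the $\nu\Delta\omega_k$ term via $q'(\omega_k)\Delta\omega_k=\Delta w_k-q''(\omega_k)|\nabla\omega_k|^2$, drop the nonnegative term involving $q''$ (its coefficient $\log\log w_k+\tfrac1{\log w_k}$ is positive on the range of $w_k$), integrate in space–time against $\psi$, and bound the $t=0$ term by the hypothesis on $u_0$, exactly as in \eqref{ineq:rawEnergyEstimate}. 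Three groups of terms then remain.

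For the dissipative group, two integrations by parts as in the proof of Theorem \ref{thrm:main} — now using the iterated-logarithm analogues of \eqref{list:factsaboutlog}, e.g. $(\log\log)'(y)=\tfrac1{y\log y}$, $(\log\log)''(y)=-\tfrac{1+\log y}{(y\log y)^2}$, and the identity $\tfrac{d}{dy}\tfrac{y}{\log y}=\tfrac{\log y-1}{\log^2 y}$ — leave a term of favourable sign, which is discarded from the left-hand side, together with a remainder reducible to $C\int_0^T\!\int_{B(0,R_0)}|\omega_k|^2\,dx\,ds+C\int_0^T\!\int_{B(0,R_0)}w_k\,dx\,ds+C$, all a priori finite (analogously to \eqref{ineq:trivial}). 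For the transport group, the same cancellation used in Theorem \ref{thrm:main} leaves only $\int_0^t\!\int u_j\,w_k\log\log w_k\,\partial_j\psi$, and since $\log\log$ grows even more slowly than $\log$, the bound $\log\log y\le 4y^{1/4}$ followed by H\"older's inequality, the Sobolev inequality and the commutator estimate for $\nabla(\psi^{\rho/4}u)$ controls it by a priori finite quantities just as before. Finally, in the vortex-stretching group one peels off the a priori bounded piece using $\psi\,q'(\omega_k)\tfrac1{\log w_k}\le C$ (here $q'\le1$ and $\tfrac1{\log w_k}$ is bounded), leaving $\big|\int_0^t\!\int(\omega\cdot\nabla u_k)\,\psi\,\tfrac{\omega_k}{w_k}\log\log w_k\big|$, to which the $\mathcal H^1$–$BMO$ duality and the Div-Curl lemma apply verbatim as in Theorem \ref{thrm:main}. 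The only genuinely new input is the uniform-in-time estimate $\sup_{0<t\le T}\big\|\psi\,\tfrac{\omega_k}{w_k}\log\log w_k\big\|_{BMO}<\infty$, which is exactly Lemma \ref{lemma:uniformbmo}.c applied with $f=\omega_k$, $F=w_k$ (so $F_\pm=w_{k,\pm}$), $\phi=\psi$, $S'=S$, $R_0'=R_0-\epsilon$, under the hypothesis that $w_{k,+}|_{Q_T(S)}$ and $w_{k,-}|_{Q_T(S)}$ exhibit amenable blow-up rates of order $1$. Collecting these bounds controls $\sup_{0<t\le T}\|\psi\,w_k\log\log w_k(t)\|_{L^1}$ by a time-independent $M_k$, and since $\psi\equiv1$ on $B(0,R_0-\epsilon)$ this yields part (i).

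For part (ii) I would imitate the two-step argument of Theorem \ref{thrm:main}(ii) with $\Phi(x)=\log\log(e+x)$. Writing $\lambda(t)=\tfrac1{c_1}\|\omega(t)\|_{L^\infty(B(0,R_0-\epsilon))}$, for $x\in B(0,R_0-\epsilon)$ with $|\omega(x,t)|\ge\lambda(t)$ one has $c_1|\omega(x,t)|\ge\|\omega(t)\|_{L^\infty(B(0,R_0-\epsilon))}$, hence
\[1\le\frac{c_1\,|\omega(x,t)|\,\log\log\big(e+c_1|\omega(x,t)|\big)}{\|\omega(t)\|_{L^\infty(B(0,R_0-\epsilon))}\,\log\log\big(e+\|\omega(t)\|_{L^\infty(B(0,R_0-\epsilon))}\big)}.\]
Integrating over $\Lambda_t(\lambda(t))\cap B(0,R_0-\epsilon)$ and using $\log\log(e+c_1y)\le C(c_1)+\log\log(e+y)$ to split off a term controlled by the a priori finite $\sup_{0\le t\le T}\|\omega(t)\|_{L^1}$ gives
\[\mbox{Vol}\Big(\Lambda_t\big(\tfrac1{c_1}\|\omega(t)\|_{L^\infty(B(0,R_0-\epsilon))}\big)\Big)\le\frac{K\,I_0(t)}{\|\omega(t)\|_{L^\infty(B(0,R_0-\epsilon))}\,\log\log\big(e+\|\omega(t)\|_{L^\infty(B(0,R_0-\epsilon))}\big)},\]
where $I_0(t)=\int_{\Lambda_t(\lambda(t))\cap B(0,R_0-\epsilon)}|\omega(x,t)|\log\log(e+|\omega(x,t)|)\,dx$ and $K$ depends only on $c_1$, $R_0$ and $\sup_{0\le t\le T}\|\omega(t)\|_{L^1}$. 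The second step bounds $I_0(t)$ by part (i): using $|\omega|\le\sum_j|\omega_j|$ and the same splitting over $\{\omega_j^2\le\sum_i\omega_i^2\}$ used in the proof of Theorem \ref{thrm:main}, one reduces $I_0(t)$ to $C\sum_j\big(\int_{B(0,R_0-\epsilon)}w_j\,dx+\int_{B(0,R_0-\epsilon)}w_j\log\log w_j\,dx\big)$; the first integral is a priori finite since $w_j\le\sqrt e+|\omega_j|$, and the second is finite by part (i) after noting $w_j\log\log w_j\le C(1+|\omega_j|\log\log w_j)$. Hence $I_0(t)\le C\sum_j M_j+C$, which gives part (ii).

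The step I expect to be the main obstacle is the dissipative-term bookkeeping: one must check that substituting the iterated logarithm for the logarithm preserves the sign of the term discarded from the left side of the energy inequality, and that the leftover pieces — in particular a term such as $\int_0^t\!\int\tfrac{w_k}{\log w_k}\,\Delta\psi\,dx\,ds$ produced by a slightly non-obvious repeated integration by parts — really do reduce to a priori finite quantities. This is the only place where the precise modification of $q$ (and the constant floor it imposes on $w_k$) is used in an essential way; everything else is either a verbatim transcription of the proof of Theorem \ref{thrm:main} or an appeal to Lemma \ref{lemma:uniformbmo}.c, which already encapsulates the additional harmonic-analytic content of the order-$1$ case.
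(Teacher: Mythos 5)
Your proposal follows the paper's own proof essentially verbatim: the paper likewise reruns the energy-inequality argument of Theorem \ref{thrm:main} with $w_k\log\log w_k$ in place of $w_k\log w_k$, notes that all cancellations except the vortex-stretching one carry over, invokes Lemma \ref{lemma:uniformbmo}.c for the uniform-in-time $BMO$ bound on the multiplier $\psi\,\tfrac{\omega_k}{w_k}\log\log w_k$, and runs part (ii) with $\Phi(x)=\log\log(e+x)$ exactly as you describe. The one point you rightly flag as delicate --- that the combined dissipative coefficient $\tfrac{\log y-1}{y\log^2 y}$ is nonnegative only for $y\ge e$ --- is glossed over in the paper (``the point-wise estimates adapt directly'') and is repaired precisely as you suggest, by the constant floor on $w_k$ coming from the choice of $e_m$ in the definition of $\log^m$ in Section \ref{sec:preliminaries} (i.e.\ $e_2=e^2$, so $w_k\ge e$).
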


\begin{proof}Multiplying equation \eqref{eq:vorticity} by
$ \psi q'(w_k)\big(\log\log w_k + w_k(\log\log)'w_k\big)$ we obtain
the evolution of $\psi w_k\log\log w_k$. The point-wise estimates in
\eqref{list:factsaboutlog} adapt directly to the function
$\log\log(y)$ (indeed, they adapt to any number of such
self-compositions of the logarithm) and, after integrating in space
and time, all of the estimates and cancellations from the previous
proof -- except those involving the vortex stretching term -- can be
duplicated directly.  We thus obtain, denoting by $R$ some {\em a
priori} finite quantity,
\begin{align}\label{ineq:raw2}\int (\psi w_k\log\log w_k)(t)~dx  \leq  \bigg|\int_0^t\int
\psi ~\omega\cdot \nabla u_k \frac {\omega_k} {w_k}\log\log w_k
~dx~ds\bigg|+R,\end{align} and, using
Lemma \ref{lemma:uniformbmo}.c., we are able to pull out of the
integral the uniformly-in-time bounded quantity,
\[\sup_t\bigg|\bigg|\psi\frac{\omega_k}{w_k}\log\log
w_k\bigg|\bigg|_{BMO},  \] and conclude exactly as in the previous
proof.\end{proof}

\begin{remark}
The two theorems apply to any local, spatially-algebraic blow-up scenario
in which the degrees of the polynomials stay \emph{uniformly bounded} near
the (possible) singular time. Geometrically, this corresponds to the number
of vortex filaments being uniformly bounded as the flow approaches the
singular time. This is somewhat unsatisfactory since, \emph{a priori}, one can
not rule out a scenario in which the number of coherent structures runs off
to infinity. The technical reason behind this restriction is that the bound on
$\| \log |P| \|_{BMO}$ blows up as the degree of $P$, $d$, goes to infinity.

Fortunately, the bound on $\| \log |P| \|_{BMO}$ is \emph{linear in $d$}
(cf. \cite{NaSoVo03}). Consequently, although the bound on the distribution
function, i.e., on the total volume of the super-level sets will blow
up (with $d$),
at least in the case of comparable volumes, the bound on the volume of a
single vortex filament will still be sub-critical (which suffices due to the
local nature of the argument).
\end{remark}

\subsection*{Acknowledgements.}

Z.B. acknowledges the support of the \emph{Virginia Space Grant
Consortium} via the Graduate Research Fellowship; Z.G. acknowledges
the support of the \emph{Research Council of Norway} via the grant
213474/F20, and the \emph{National Science Foundation} via the grant
DMS 1212023.

\bibliographystyle{plain}
\bibliography{references}
\end{document}